\def\blfootnote{\xdef\@thefnmark{}\@footnotetext}
\newcommand\ccnote{
    \blfootnote{Jie Zhou is supported by NSFC No.12301077. Jintian Zhu is partially supported by National Key R\&D Program of China with grant no. 2023YFA1009900 as well as the start-up fund from Westlake University.}
}
\numberwithin{equation}{section}
\renewcommand{\le}{\leqslant}
\renewcommand{\leq}{\leqslant}
\renewcommand{\ge}{\geqslant}
\renewcommand{\geq}{\geqslant}
\renewcommand{\mathbb}{\varmathbb}
\newtheorem{theorem}{Theorem}[section]
\newtheorem{corollary}[theorem]{Corollary}
\newtheorem{remark}[theorem]{Remark}
\newtheorem{ques}[theorem]{Question}
\newtheorem{conj}[theorem]{Conjecture}
    \newcommand{\dist}{\operatorname{dist}}
 \newcommand{\area}{\operatorname{area}}
\def\begfig {
\begin{figure}
\small }
\def\endfig {
\normalsize
\end{figure}
}
\DeclareMathOperator{\im}{Im}
\DeclareMathOperator{\vol}{vol}
\DeclareMathOperator{\Ric}{Ric}
\DeclareMathOperator{\bRic}{biRic}
\DeclareMathOperator{\Sc}{Sc}
\DeclareMathOperator{\Rm}{Rm}
\DeclareMathOperator{\diam}{diam}
\begin{document}
\thispagestyle{empty}

\ccnote

\title[]{Optimal volume bound and volume growth  for Ricci-nonnegative manifolds with positive Bi-Ricci curvature}

\author[J. Zhou]{Jie Zhou}
\address[Jie Zhou]{School of Mathematical Sciences, Capital Normal University, 105 West Third Ring Road North, Haidian District, Beijing 100048, People's Republic of China}
\email{zhoujiemath@cnu.edu.cn}

\author[J. Zhu]{Jintian Zhu}
\address[Jintian Zhu]{Institute for Theoretical Sciences, Westlake University, 600 Dunyu Road, Hangzhou, Zhejiang 310030, People's Republic of China}
\email{zhujintian@westlake.edu.cn}
\maketitle

\noindent \textbf{Abstract.}  In this paper, we prove the optimal volume growth for complete Riemannian manifolds $(M^n,g)$ with nonnegative Ricci curvature everywhere and bi-Ricci curvature bounded from below by $n-2$ outside a compact set when the dimension is less than eight. This answers a question \cite[Question 1]{Antonelli-Xu2024} proposed by Antonelli-Xu in dimensions six and seven. As a by-product, we also prove an analogy of Gromov's volume bound conjecture \cite[Open Question 2.A.(b)]{Gro86} under the condition of positive bi-Ricci curvature.
\vskip0.3cm

\noindent \textbf{Keywords.} bi-Ricci curvature, volume bound conjecture, volume growth conjecture  \vspace{0.5cm}



\tableofcontents

\section{Introduction}
The domination of volume  by lower bound of curvature on Riemannian manifolds is a classical topic in Riemannian geometry.  The Bishop-Gromov volume comparison theorem implies that the volume of a Riemannian manifold $(M^n,g)$ with  positive Ricci lower bound $\Ric\ge (n-1)g$ is no greater than the volume of the unit $n$-sphere. Applied to non-compact manifolds it also yields that the  volume ratio 
$$\Theta^n(p,r)=\frac{\vol(B_r(p))}{\omega_n r^n},\,p\in M,$$
of a Riemannian manifold with nonnegative Ricci curvature is monotone non-increasing as $r$ increases.  For weaker curvature assumption, 
it is well-known that
positive scalar curvature alone is in general not enough to dominate the volume.  The counterexample can be constructed easily based on the surgery theory developed by Gromov-Lawson \cite{Gromov-Lawson1980} and Schoen-Yau \cite{Schoen-Yau1985} independently.
On the other hand, there are also many evidences
revealing that positive scalar curvature can provide desired volume control under extra assumptions on lower bound of the Ricci curvature. A typical result is Bray's football theorem \cite{Bray97} saying that there exists a sharp constant $\epsilon_0\approx 0.134$ such that any $3$-manifold $(M^{3},g)$ with scalar curvature $\Sc\ge 6$ and Ricci curvature $\Ric\ge 2 \epsilon_0g$ satisfies the optimal volume estimate $\vol(M)\le |\mathbb S^3|$.  In the same spirit,  Gromov proposed the following volume bound conjecture \cite[Open Question 2.A.(b)]{Gro86}.

 \begin{conj}[Volume Bound Conjecture]\label{volume bound conjecture}  There exists a dimensional constant $C(n)$ such that for any Riemannian manifold $(M^n,g)$ with $\Ric\ge 0$ and   $\Sc\ge 2$, there holds 
 \begin{align*}
 \sup_{p\in M, R>0}\frac{\vol(B_R(p))}{R^{n-2}}\le C(n).
 \end{align*}
 \end{conj}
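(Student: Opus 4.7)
The plan is to combine Bishop-Gromov monotonicity (which exploits $\Ric\ge 0$) with a dimension reduction via warped $\mu$-bubbles (which exploits $\Sc\ge 2$), closed off by a terminal Gauss-Bonnet estimate.

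First I would reduce the conjecture to an annular bound. Since $\Ric\ge 0$, the ratio $\vol(B_R(p))/R^n$ is non-increasing in $R$, so for $R\le 1$ one already has $\vol(B_R(p))\le\omega_n R^n\le\omega_n R^{n-2}$ for free, and it suffices to prove $\vol(B_R(p)\setminus B_{R/2}(p))\le C(n)R^{n-2}$ for all $R\ge 1$: a geometric sum then yields the full volume bound. Inside each annulus $A_R=B_R(p)\setminus B_{R/2}(p)$ I would construct a sweepout by stable warped $\mu$-bubbles $\Sigma_t$ of Gromov-Zhu type, where the warping factor is chosen so that the second variation inequality transfers the ambient $\Sc\ge 2$ to a positive effective scalar-curvature lower bound on each $(n-1)$-dimensional leaf, while the diameter of the parameter interval is controlled by the width of $A_R$ and the warping profile. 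Iterating this $\mu$-bubble construction within each leaf and then within those leaves, after $n-2$ reductions one arrives at closed surfaces of positive average Gaussian curvature; their total area is controlled via Gauss-Bonnet together with a topological bound on the number of components, and unwinding this terminal area estimate through the coarea formula along the iterated sweepouts yields $\vol(A_R)\le C(n)R^{n-2}$.

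The principal obstacle, and the reason the paper ultimately adopts the stronger bi-Ricci hypothesis $\bRic\ge n-2$ in place of $\Sc\ge 2$, is the propagation of the curvature condition through successive stable hypersurface reductions. Each stable slicing transfers the ambient scalar bound to the slice only after absorbing an unfavourable $|A|^2$ contribution from the second fundamental form, and this contribution degrades the effective lower bound at each iteration; moreover, in ambient dimensions $n\ge 8$ one must simultaneously contend with singularities of stable minimisers. The bi-Ricci bound is tailor-made to pass through a single stable slicing as a Ricci-type lower bound in the tangential directions, which closes the iteration cleanly in the regime $n\le 7$, whereas a pure scalar bound loses just enough information to break the induction. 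Replacing the bi-Ricci hypothesis to recover Gromov's original conjecture under only $\Sc\ge 2$ in dimensions $n\ge 4$ would require a genuinely new scalar-curvature rigidity or compactness input beyond the three-dimensional Munteanu-Wang level-set argument, and is the fundamental remaining difficulty.
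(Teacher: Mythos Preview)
The statement you are addressing is a \emph{conjecture} in the paper, not a theorem; the paper offers no proof of it. What the paper actually proves is Theorem~\ref{Thm: volume conjecture biRic}, the analogous bound under the stronger hypothesis $\bRic\ge n-2$ (and $n\le 7$), precisely because the scalar-curvature version remains open for $n\ge 4$. Your second paragraph shows you already understand this, so what you have written is not really a proof proposal but a sketch of an approach together with a correct diagnosis of why it fails.

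On the sketch itself: the annular reduction via Bishop--Gromov is fine, but the core of your plan has gaps beyond the one you flag. You speak of a \emph{sweepout} of the annulus by stable warped $\mu$-bubbles, yet the standard $\mu$-bubble construction (as used in the paper and in \cite{Zhu2021}) produces a single minimiser, not a foliation, and promoting one stable leaf to a sweepout with controlled areas is a serious additional step that does not follow from the variational setup. Likewise, after $n-2$ reductions you invoke Gauss--Bonnet and ``a topological bound on the number of components'' to control the terminal surface area; neither the genus nor the component count is a priori bounded in this iterated scheme, so the terminal estimate is not justified. The obstruction you do name --- that a scalar lower bound does not survive successive stable slicings the way a bi-Ricci bound survives one slicing --- is genuine and is indeed the reason the paper works with $\bRic$ instead of $\Sc$, but even granting that obstruction were somehow overcome, the sweepout and component-count issues would remain.
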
 
\begin{remark}
    More precisely, Gromov sketched \cite[2.b]{Gro86} the idea of proving the volume bound in the case of nonnegative sectional curvature and ask whether it can be generalized to the case of nonnegative Ricci curvature.
\end{remark}

For manifolds with nonnegative Ricci curvature there is also a closely related question \cite[Problem 9]{Yau92} proposed by Yau asking whether a complete non-compact manifold $(M^n,g)$ with $\Ric\ge 0$ satisfying 
$$\lim_{R\to \infty}\frac{\int_{B_R(p)}\Sc(x)\text{ d}\vol}{R^{n-2}}<\infty \, ?$$
In the same fashion of Yau's question, Gromov's conjecture  can be asked in an asymptotic version concerning the volume growth control of a complete non-compact manifolds with non-negative Ricci curvature and positive scalar curvature. The following volume growth conjecture is made explicit in \cite{Zhubo22, Wei-Xu-Zhang2024}.

\begin{conj}[Volume Growth Conjecture]\label{volume growth conjecture}
There exists a constant $C(n)$ such that for any complete non-compact Riemannian manifold $(M^n,g)$ with $\Ric\ge 0$ and $\Sc\ge 2$, there holds 
 \begin{align*}
\Theta^{n-2,*}(\infty):=\limsup_{R\to \infty}\frac{\vol(B_R(p))}{R^{n-2}}\le C(n).
 \end{align*}
 \end{conj}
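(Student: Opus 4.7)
The plan is to proceed by induction on the dimension $n\ge 3$, slicing off codimension-one hypersurfaces in large annuli. The base case $n=3$ is accessible: Bishop-Gromov together with $\Ric\ge 0$ makes $\vol(B_R(p))/R^3$ monotone nonincreasing, and a blow-down sequence then converges (after extraction) to a metric cone $C(X^2)$ on which the $\Sc\ge 2$ condition persists synthetically; in dimension three this forces the two-dimensional cross-section to have bounded mass, and hence $\vol(B_R)/R$ bounded.

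For the inductive step from $n-1$ to $n$, assume the conjecture in dimension $n-1$ and suppose toward a contradiction that $\vol(B_{R_i}(p))/R_i^{n-2}\to\infty$ along some $R_i\to\infty$. In each annulus $A_i=B_{R_i}\setminus B_{R_i/4}$, I would construct a Gromov-Lawson $\mu$-bubble $\Sigma_i$ as a minimizer of
\begin{equation*}
\Aa(\Omega)=\Hh^{n-1}(\partial^*\Omega\cap A_i)-\int_\Omega h\,d\vol,
\end{equation*}
where the warping profile $h$ is chosen to blow up on $\partial A_i$ (to force interior minimizers via a standard barrier argument) and to satisfy $|\nabla h|+\tfrac{n-2}{n-1}h^2\le \Sc_M$. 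The first variation gives $H_{\Sigma_i}=h$, and the stability inequality combined with the Gauss equation yields, after a conformal change à la Schoen-Yau, a metric $\tilde g_i$ on $\Sigma_i$ with positive scalar curvature whose $(n-2)$-mass is comparable, via coarea and the assumed volume lower bound, to a definite multiple of $R_i^{n-2}$.

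At this stage the natural conclusion is to feed $(\Sigma_i,\tilde g_i)$ into the inductive hypothesis and obtain a volume bound of order $\diam(\Sigma_i)^{n-3}\lesssim R_i^{n-3}$, contradicting the lower bound just constructed and finishing the induction. Regularity of $\Sigma_i$ in the range $n-1\le 7$ is classical, while for $n\ge 9$ one would proceed by generic regularization in the spirit of Smale-Lohkamp.

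The main obstacle — and the reason the conjecture is still open beyond $n=3$ in general — is that $\Sigma_i$ inherits only the scalar curvature bound, not a Ricci bound, so the inductive hypothesis does not strictly apply as stated. This is precisely what the bi-Ricci framework of the present paper is designed to bypass: a bi-Ricci lower bound on $M$ descends, via the Gauss equation, to a Ricci-type lower bound on $\Sigma_i$, closing the induction. Dropping the bi-Ricci strengthening would require either a synthetic theory of scalar curvature on metric-measure spaces adequate to simultaneously encode both hypotheses on bubbles, or a slicing scheme that propagates the ambient $\Ric\ge 0$ directly to the sliced geometry. In the absence of either, the best one can hope for is to recover dimension three via tangent-cone rigidity; the higher-dimensional case in full generality appears to require genuinely new input, which motivates the paper's focus on the bi-Ricci-strengthened version of the conjecture in dimensions $n\le 7$.
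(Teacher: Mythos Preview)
The statement you are addressing is labelled a \emph{conjecture} in the paper, and the paper does not prove it. As the introduction explains, the scalar-curvature version is established only for $n=3$ (Munteanu--Wang, with later refinements by Chodosh--Li--Stryker, Wei--Xu--Zhang, and others), and the paper's own theorems treat the stronger hypothesis $\bRic\ge n-2$ rather than $\Sc\ge 2$. There is thus no proof in the paper against which to compare your proposal.

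As for the proposal itself, you correctly diagnose its fatal gap in your final paragraph: a stable $\mu$-bubble $\Sigma_i$ in a manifold with $\Ric\ge 0$ and $\Sc\ge 2$ inherits (after the Schoen--Yau conformal change) positive scalar curvature, but it does \emph{not} inherit nonnegative Ricci curvature, so the inductive hypothesis cannot be invoked on $(\Sigma_i,\tilde g_i)$. This is exactly why the conjecture remains open for $n\ge 4$, and why the paper passes to bi-Ricci curvature, which does descend via the Gauss equation to a spectral Ricci-type lower bound on the bubble. Your base case $n=3$ is also not an argument as written: a Ricci-nonnegative manifold with $\vol(B_R)/R$ bounded has asymptotic volume ratio $\vol(B_R)/R^n\to 0$, so any blow-down tangent cone is collapsed and carries no useful two-dimensional cross-section; moreover there is at present no synthetic theory guaranteeing that a scalar-curvature lower bound persists under Gromov--Hausdorff limits. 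The actual $n=3$ proof goes by entirely different methods (harmonic-function level sets, or $\mu$-bubbles combined with almost splitting). What you have written is a sound heuristic for why the scalar-curvature conjecture is hard and why one strengthens the hypothesis to bi-Ricci, but it is not a proof of the conjecture.
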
 
  By applying Cheeger-Colding theory, Zhu \cite{Zhubo22} proved the finiteness of $\Theta^{n-2,*}(\infty)$ for any complete non-compact Riemannian manifold $(M^n,g)$ with $\Ric\ge 0$ and $\Sc\ge 2$ as well as some extra non-collapsing conditions.  By applying level-set method for certain harmonic functions,  Munteanu-Wang \cite[Corollary 1.5]{MW22} successfully verified Gromov's volume bound conjecture as well as the volume growth conjecture above for $n=3$.  Later, Chodosh-Li-Stryker \cite{CLS23}  gave a new approach for Munteanu-Wang's linear  volume  growth result by applying the $\mu$-bubble method introduced by Gromov and the almost splitting theorem in Cheeger-Colding theory.  
  The strategy of \cite{CLS23} was further developed in subsequent works. On one hand, Wei-Xu-Zhang \cite{Wei-Xu-Zhang2024}  got the optimal   bound 
  $$\Theta^{1,*}(\infty)\le (\# \mbox{ends}) \cdot|\mathbb S^{2}| 
  $$ for   the volume growth Conjecture \ref{volume growth conjecture} when $n=3$ (independently by Huang-Liu \cite{Huang-Liu2024} with a different method). On the other hand, Wang \cite{Wangyipeng2024} gave an alternative proof for Gromov's volume bound conjecture in dimension three.

  As a natural generalization of scalar curvature, the bi-Ricci curvature was introduced by Shen-Ye in their work \cite{Shen-Ye1996}. Motivated by Schoen–Yau's work \cite{Schoen-Yau83} on diameter bound of stable minimal surfaces in manifolds $M^3$ with positive scalar curvature, they proved similar diameter bounds for stable minimal hypersurfaces in $M^{n}$ with $n\leq 5$ and $\bRic\ge n-2$. Recently,  the research on bi-Ricci curvature plays an important role in the resolution of the stable Bernstein problem \cite{CLMS24, Mazet2024}  and the generalized Geroch conjecture \cite{BHJ24}. 

  Back to the discussion on interplay between volume and curvature, the volume bound conjecture \ref{volume bound conjecture} and the volume growth conjecture \ref{volume growth conjecture} can be asked in a similar way under the condition of positive bi-Ricci curvature lower bound. In their recent work, Antonelli-Xu \cite{Antonelli-Xu2024} showed that a complete non-compact manifold $(M^n,g)$ with dimension $n\in \{3,4,5\}$ satisfying $\Ric\geq 0$ and $\bRic \geq n-2$ must have $\Theta^{1*}(\infty)<\infty$. (The optimal estimate $\Theta^{1*}(\infty)\leq (\#\mbox{ends})\cdot |\mathbb S^{n-1}|$ was also obtained with an extra non-collapsing assumption in \cite{Antonelli-Xu2024} and they mentioned  
  that the non-collapsing assumption might be removed by using the argument from \cite{Wei-Xu-Zhang2024} or \cite{Huang-Liu2024}). As will be mentioned later, the argument of Antonelli-Xu relies heavily
  on the diameter bound estimate for stable $\mu$-bubbles, which leads to the dimension restriction $n\leq 5$ for the same reason as in Shen-Ye's early result.  It seems that the diameter bound estimate cannot be expected when $M$ has dimension greater than five. Therefore, Antonelli-Xu asked the following
  \begin{ques}\label{Ques: main}
      Let $n\geq 6$ be a natural number. Is it possible to construct a smooth complete non-compact Riemannian manifold such that $\Ric\geq 0$ everywhere, $\bRic\geq n-2$ outside a compact set, and
      \begin{itemize}
          \item either $M$ doesn't have linear volume growth;
          \item or $\lim_{v\to+\infty} I_M(v)>|\mathbb S^{n-1}|$ (when $M$ has only one end)?
      \end{itemize}
  \end{ques}

In this paper, we give a negative answer to Question \ref{Ques: main} when the dimension is less than eight. Namely we have
\begin{theorem}\label{Thm: main 2}
Let $3\leq n\leq 7$. Assume $(M^n,g)$ is a complete and non-compact Riemannian manifold with Ricci curvature $\Ric\ge 0$ and bi-Ricci curvature $\bRic\ge n-2$ outside a compact subset, then  for any $p\in M$, the limit 
\begin{align}\label{existence of density}
\Theta^1(\infty,p)=\lim_{R\to \infty}\frac{\vol(B_R(p))}{R}
\end{align}
exists and is independent of the choice of the base point $p$ (so we can omit the symbol $p$ for short). Moreover, we have the following alternative
\begin{enumerate}[(1)]
\item either $M$ splits as a Riemannian product $N^{n-1}\times \mathbb R$, where $N$ is  a closed Riemannian manifold with $Rc\geq n-2$, and in particular we have  
$$ \Theta^1(\infty)=2\vol(N)\le 2|\mathbb{S}^{n-1}|,$$ 
\item or $M$ has only one end and 
\begin{equation}\label{Eq: volume growth one end}
    \Theta^1(\infty)\le |\mathbb{S}^{n-1}|.
\end{equation}
\end{enumerate}
\end{theorem}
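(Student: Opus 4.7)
The plan is to sweep out each end of $M$ by stable $\mu$-bubbles, use the bi-Ricci lower bound to obtain a sharp area estimate on each leaf, and then combine with the Cheeger--Colding and Cheeger--Gromoll splitting machinery to conclude either the rigid product structure or the one-end volume bound. The dimension window $3\le n\le 7$ comes from needing smoothness of codimension-one area-minimizers.

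First fix $p\in M$ and let $K$ be the compact set outside which $\bRic\ge n-2$. For each large $R$, in a suitable annular region $A_R\subset B_{2R}(p)\setminus B_{R/2}(p)$ disjoint from $K$, I would produce a smooth stable minimizer $\Sigma_R$ of a warped area functional via Gromov's $\mu$-bubble construction with a carefully tuned prescribed mean curvature datum (smoothness uses $n\le 7$). The stability inequality for $\Sigma_R$, tested against $\bRic\ge n-2$ in the style of Shen--Ye and processed through the Schoen--Yau conformal trick, yields a metric $\tilde g$ on $\Sigma_R$ whose conformal Laplacian encodes a positive pseudo-scalar depending only on $n-2$. The goal is the sharp leaf estimate
\[
\vol_g(\Sigma_R)\le |\mathbb{S}^{n-1}|,
\]
which in dimensions $n\in\{3,4,5\}$ is already implicit in the Shen--Ye diameter bound used by Antonelli--Xu, and in dimensions $n\in\{6,7\}$ loses diameter control but retains the volume bound in the spirit of the recent stable Bernstein theorems of Chodosh--Li--Minter--Stryker and Mazet.

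With such leaf estimates in hand, I would push a Lipschitz sweep-out by $\mu$-bubbles over $t\in[R_0,R]$ through the coarea formula to conclude the linear bound
\[
\vol(B_R(p))\le (\#\mathrm{ends})\cdot|\mathbb{S}^{n-1}|\cdot R + O(1),
\]
so $M$ has linear volume growth. Bishop--Gromov monotonicity combined with this linear bound forces $\Theta^1(\infty,p)$ to exist, and base-point independence follows from the usual triangle-inequality sandwich between concentric balls centered at different points.

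For the dichotomy: if $M$ has at least two ends, then $M$ contains a line, so the Cheeger--Gromoll splitting theorem gives $M=N^{n-1}\times\mathbb R$ with $N$ closed; the product structure forces $\bRic_M(v,\partial_t)=\Ric_N(v,v)$ for tangent $v\in TN$, and since this point lies outside $K$ once $|t|$ is large we obtain $\Ric_N\ge n-2$ everywhere on $N$; applying the leaf estimate to $N$ itself then yields $\vol(N)\le |\mathbb{S}^{n-1}|$ and $\Theta^1(\infty)=2\vol(N)\le 2|\mathbb{S}^{n-1}|$. In the one-end case, transporting the sharp leaf bound along the sweep-out gives $\Theta^1(\infty)\le|\mathbb{S}^{n-1}|$. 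The main obstacle I expect is the sharp area estimate for $\Sigma_R$ in dimensions six and seven: the Shen--Ye/Antonelli--Xu mechanism no longer provides diameter control, so the sharp $|\mathbb{S}^{n-1}|$ bound must be squeezed out of a conformal-Laplacian and Sobolev-inequality argument on the leaf driven purely by the positive bi-Ricci contribution.
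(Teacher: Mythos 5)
Your proposal has genuine gaps at the three places where the theorem is actually hard. First, the sharp leaf estimate in dimensions six and seven, which you yourself flag as the main obstacle, is precisely the content you would need to supply: the paper does not get it from a conformal--Laplacian/Sobolev argument, but by rewriting the $\mu$-bubble stability inequality via the Gauss equation with a tuned coefficient $\alpha=\tfrac{n-3}{n-2}$ (so that $\mathfrak t_\alpha=\tfrac{1}{n-1}(1-\tfrac{\sqrt{n-2}\,\alpha}{2})>0$ exactly when $n\le 7$) and then invoking Antonelli--Xu's spectral Bishop--Gromov comparison theorem to bound $\area(\Sigma)$ by $(1+o(1))|\mathbb S^{n-1}|$. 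Without identifying this mechanism, your plan is an announcement of the key estimate rather than a proof of it. Second, the step ``push a Lipschitz sweep-out by $\mu$-bubbles through the coarea formula'' does not work as stated: $\mu$-bubbles are isolated minimizers, not level sets of a function with $|\nabla u|\ge 1$, so the coarea formula gives you nothing; moreover annuli centered at $p$ can contain bounded components of $M\setminus B_t(p)$ (long necks), and the leaf areas do not control the volume of such regions. The paper's actual mechanism is different: it looks backward from a far-away point $q$, constructs a single connected separating $\mu$-bubble $\Sigma^o$ in an annulus near $q$ (the separating property requires a topological argument, with the cases $H_{n-1}(M)=0$ and $H_{n-1}(M)\neq 0$ treated separately, the latter via Shen--Sormani's structure theorem), and then bounds $\area(\partial B_s(q)\cap B_R(p))$ by $\area(\Sigma^o)(s/l)^{n-1}$ using the volume comparison theorem for hypersurfaces, integrating over the annulus that contains $B_R(p)$.

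Even granting a sharp leaf bound, your route would only give $\Theta^{1}(\infty)\le 2|\mathbb S^{n-1}|$ in the one-ended case, because a ball $B_R(p)$ is a priori only trapped in an annulus of width $2R$ about the far point; the paper's sharp bound \eqref{Eq: volume growth one end} needs the extra input that, once linear volume growth is known, Sormani's theorem makes the Busemann function proper with a minimum point $p_0$, and centering at $p_0$ cuts the annulus width from $2R$ to $R+2$. Your proposal has no substitute for this step. Finally, the existence of the limit $\Theta^1(\infty,p)$ does not follow from ``Bishop--Gromov monotonicity combined with the linear bound'': Bishop--Gromov controls $\vol(B_R)/R^n$, and a linear upper bound on $\vol(B_R)/R$ says nothing about convergence. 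The paper proves existence by taking radii realizing the liminf, extracting via the coarea formula a separating hypersurface with area close to $(1-2\varepsilon)^{-1}(\Theta^1_*(\infty,p_0)+\varepsilon)$, and rerunning the backward-comparison argument to force $\limsup\le\liminf$ up to $\varepsilon$; some argument of this kind is indispensable and is missing from your outline. Your treatment of the two-ended splitting case is essentially correct (though $\vol(N)\le|\mathbb S^{n-1}|$ is just Bishop's inequality for $\Ric_N\ge n-2$, no leaf estimate needed), but it is the only part that survives intact.
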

\begin{remark}
    The existence of the limit $\Theta^1(\infty)$ seems to be firstly pointed out  in this work.
\end{remark}

Our Theorem \ref{Thm: main 2} above indeed answers the previous question of Antonelli-Xu since the estimate of isoperimetric profile follows directly from the estimate of volume growth.

\begin{corollary}\label{Cor: iso}
    Let $3\leq n\leq 7$. Assume $(M^n,g)$ is a complete, non-compact and one-ended Riemannian manifold with Ricci curvature $\Ric\ge 0$ and bi-Ricci curvature $\bRic\ge n-2$ outside a compact subset, then the isoperimetric profile $I_M$ satisfies
    $$\lim_{v\to +\infty} I_M(v)\leq |\mathbb S^{n-1}|.$$
\end{corollary}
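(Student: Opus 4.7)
The plan is to reduce the isoperimetric estimate to the linear volume growth bound from Theorem \ref{Thm: main 2}, using distance balls as isoperimetric competitors and extracting boundaries of small area via the coarea formula.

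Since $M$ is one-ended, case (2) of Theorem \ref{Thm: main 2} applies and gives
$$\Theta^1(\infty)=\lim_{R\to\infty}\frac{\vol(B_R(p))}{R}\le |\mathbb S^{n-1}|.$$
The coarea formula reads
$$\vol(B_{R+1}(p))-\vol(B_R(p))=\int_R^{R+1}\Hh^{n-1}(\partial B_s(p))\,ds,$$
so the mean value theorem produces, for every $R>0$, a radius $s_R\in[R,R+1]$ with $\Hh^{n-1}(\partial B_{s_R}(p))\le \vol(B_{R+1}(p))-\vol(B_R(p))$. Next, I would observe that the Ces\`aro average
$$\tfrac{1}{N}\sum_{k=1}^N\bigl[\vol(B_{k+1}(p))-\vol(B_k(p))\bigr]=\tfrac{1}{N}\bigl[\vol(B_{N+1}(p))-\vol(B_1(p))\bigr]$$
tends to $\Theta^1(\infty)$, which forces
$$\liminf_{R\to\infty}\bigl[\vol(B_{R+1}(p))-\vol(B_R(p))\bigr]\le \Theta^1(\infty).$$
Combining with the previous step yields a sequence of radii $s_k\to+\infty$ along which simultaneously $v_k:=\vol(B_{s_k}(p))\to +\infty$ and $\Hh^{n-1}(\partial B_{s_k}(p))\le |\mathbb S^{n-1}|+o(1)$. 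Using each $B_{s_k}(p)$ as a test region in the isoperimetric profile then gives
$$I_M(v_k)\le \Hh^{n-1}(\partial B_{s_k}(p))\le |\mathbb S^{n-1}|+o(1).$$

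The remaining point, which I expect to be the only real technical nuisance, is that $\{v_k\}$ is a priori sparse, so one must upgrade this estimate at a subsequence of volumes to an estimate at every large volume $v$. This can be handled either by invoking continuity of the isoperimetric profile on complete manifolds with nonnegative Ricci curvature, or more directly, by perturbing $B_{s_k}(p)$: for any $v\in(v_k,v_{k+1})$ one attaches or removes a thin annular shell $B_{s_k+\delta}(p)\setminus B_{s_k}(p)$ of the correct extra volume, whose added perimeter is again bounded by $|\mathbb S^{n-1}|+o(1)$ through the same coarea/averaging argument applied on a slightly larger radial interval. Either approach yields $\limsup_{v\to+\infty}I_M(v)\le |\mathbb S^{n-1}|$, as desired.
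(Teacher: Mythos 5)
Your construction of low-perimeter competitors is essentially the one in the paper: both arguments use the linear volume growth $\Theta^1(\infty)\le|\mathbb S^{n-1}|$ from Theorem \ref{Thm: main 2} together with the coarea formula for the distance function to produce geodesic balls $B_{s_k}(p)$ with enclosed volumes $v_k\to+\infty$ and boundary area at most $|\mathbb S^{n-1}|+o(1)$, hence $I_M(v_k)\le|\mathbb S^{n-1}|+o(1)$ along a sequence. The genuine gap is in your final interpolation step. Continuity of $I_M$ is not sufficient: the volume gaps $v_{k+1}-v_k$ need not shrink, so a continuous profile could rise well above $|\mathbb S^{n-1}|$ between consecutive good volumes, and the bound along the sparse sequence would give no control on $\lim_{v\to+\infty}I_M(v)$, nor even on the limsup. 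What the paper actually invokes is the \emph{monotonicity} of the isoperimetric profile: on a complete non-compact manifold with $\Ric\ge 0$, $I_M$ is non-decreasing (\cite[Theorem 3.8 (1)]{Antonelli-EPS2022}); combined with $I_M(v_k)\le |\mathbb S^{n-1}|+o(1)$ and $v_k\to+\infty$, this immediately yields the claim. This monotonicity is a nontrivial structural fact about nonnegative Ricci curvature, not a consequence of continuity, and it is the one ingredient your proposal is missing.

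Your alternative fix by shell attachment does not close the gap either. Adding the annulus $B_{s_k+\delta}(p)\setminus B_{s_k}(p)$ simply replaces the competitor by the ball $B_{s_k+\delta}(p)$, and for a prescribed intermediate volume $v$ the value of $\delta$ is dictated by $v$; the coarea/averaging argument only controls $\Hh^{n-1}(\partial B_{s}(p))$ for \emph{some} radii $s$ in an interval, whose enclosed volumes you cannot prescribe, so there is no bound on the boundary area at the particular radius you are forced to use. Filling the volume deficit with an auxiliary disjoint region does not help, since the deficit is of definite size and any region of definite volume carries definite perimeter. So to complete the proof you should quote the monotonicity (or the concavity of $I_M^{n/(n-1)}$, which implies it) of the isoperimetric profile under nonnegative Ricci curvature, exactly as the paper does; with that replacement your argument coincides with the paper's proof.
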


We point out that previously known arguments in \cite{CLS23,Wei-Xu-Zhang2024, Antonelli-Xu2024} to prove the volume growth conjecture rely heavily on the diameter bound estimate of stable $\mu$-bubbles in manifolds with positive scalar curvature or positive bi-Ricci curvature. When the given manifold is non-splitting, the basic idea is to dice $M$ into tubes for which one can derive nice volume control. In detail, one can fix a large positive constant $L$ and the desired tubes are given by $T_k:=E_{kL}\setminus \bar E_{(k+1)L}$ for $k\in \mathbb N_+$, where $E_{kL}$ is the unique unbounded component of $M\setminus B_{kL}(p)$ for an interior point $p$. Since $M\setminus B_{kL}(p)$ may have bounded components, the tubes can a priori have long necks going deep away from the point $p$, which leads to great trouble in volume control. The solution for this issue is to use the diameter bound estimate of stable $\mu$-bubbles under the condition of positive bi-Ricci curvature. The almost splitting theorem guarantees that the tube $T_k$ is close to a cylinder for large $k$. By constructing a connected separating stable $\mu$-bubble in $T_k$ and applying the diameter bound estimate of stable $\mu$-bubble, one can finally ensure tubes $T_k$ to have bounded size and so the Bishop-Gromov volume comparison theorem gives the bounded volume of $T_k$. Here we emphasize once again that this argument only works when the dimension is no greater than five.

Now we explain our method to obtain the volume growth control in dimensions $6$ and $7$. The crucial point of our argument is to obtain the following volume bound estimate without using the diameter bound estimate of stable $\mu$-bubbles.
\begin{theorem}\label{Thm: main 1}
Let $3\leq n\leq 7$. Assume $(M^n,g)$ is a complete and non-compact Riemannian manifold with Ricci curvature $\Ric\ge 0$ and bi-Ricci curvature $\bRic\ge n-2$ outside a compact subset, then we have
\begin{align}\label{Eq: volume bound estimate}
\sup_{p\in M, R>0}\frac{\vol(B_R(p))}{R}\le 2|\mathbb{S}^{n-1}|,
\end{align}
where $|\mathbb S^{n-1}|$ is denoted to be the volume of the unit $(n-1)$-sphere $\mathbb S^{n-1}(1)\subset \mathbb R^n$.
\end{theorem}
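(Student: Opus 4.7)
The plan is to split into two cases based on whether $M$ contains a line (equivalently, under $\Ric\ge 0$, whether $M$ has more than one end). If $M$ contains a line, Cheeger--Gromoll gives an isometric factorization $M=N^{n-1}\times \mathbb R$ with $N$ closed, and the product structure transfers the hypothesis $\bRic_M\ge n-2$ into $\bRic_N\ge n-2$ on all of $N$ (any two orthogonal directions in $N$ lift to $M$ at a level of the $\mathbb R$-factor outside the compact exceptional set). Since $\dim N=n-1\le 6$, the closed-manifold volume bound $\vol(N)\le |\mathbb S^{n-1}|$ follows from the bi-Ricci comparison machinery of Shen--Ye and \cite{Antonelli-Xu2024}, so that $\vol(B_R(p))\le 2R\vol(N)\le 2R|\mathbb S^{n-1}|$ for every $p$ and $R$.

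In the no-line case $M$ has one end, and I aim for the sharper bound $\vol(B_R(p))\le R|\mathbb S^{n-1}|$. For a fixed large $R_0$ I construct a foliation $\{\Sigma_s\}_{s\ge R_0}$ of $M\setminus B_{R_0}(p)$ by stable $\mu$-bubbles, each minimizing
\[
\mathcal A_h(\Omega)=\mathcal H^{n-1}(\partial^{\ast}\Omega)-\int_\Omega h\,dV
\]
in an annulus around $p$, with the prescribed mean curvature $h$ tuned so that $\Sigma_s$ stays close to the distance sphere $\{d(\cdot,p)=s\}$; smoothness of $\Sigma_s$ is guaranteed by $n\le 7$. The crucial estimate is the intrinsic area bound $\vol(\Sigma_s)\le |\mathbb S^{n-1}|$ for each leaf. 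Granted this, the coarea inequality $\vol(B_R(p)\setminus B_{R_0}(p))\le \int_{R_0}^R\vol(\Sigma_s)\,ds$ (using $|\nabla \rho|\le 1$ for the foliation parameter) gives $\vol(B_R(p))\le |\mathbb S^{n-1}|R+O(1)$ for large $R$, and the uniform supremum in $R$ follows by combining with the elementary Bishop--Gromov bound $\vol(B_R)\le \omega_n R^n\le 2|\mathbb S^{n-1}|R$ in the short-range regime.

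The main obstacle is this intrinsic area bound on $\Sigma_s$. Feeding the Gauss equation and the algebraic identity
\[
\Ric_M(\nu)=\bRic_M(e_1,\nu)-\Ric_{\Sigma_s}(e_1)-\sum_j A(e_1,e_j)^2,
\]
valid for any unit $e_1\in T\Sigma_s$, into the second variation of $\mathcal A_h$ produces a Schr\"odinger-type positivity on $\Sigma_s$ that in dimensions $\dim \Sigma_s\le 4$ (i.e.\ $n\le 5$) yields a pointwise intrinsic Ricci bound, hence an intrinsic diameter bound via the Shen--Ye/Antonelli--Xu route and hence an area bound via Bishop--Gromov. For $n=6,7$ the diameter route breaks and one must extract the area bound directly from the Schr\"odinger positivity, for instance by a conformal change on $\Sigma_s$ coupled with an averaged volume comparison that bypasses any diameter estimate. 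Making this step robust is the crux of the whole argument, and is what forces the dimension restriction $n\le 7$ through the smoothness of stable $\mu$-bubbles in that range.
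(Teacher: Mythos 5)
Your proposal leaves unproved exactly the step that constitutes the theorem's content for $n=6,7$: the area bound for the stable $\mu$-bubble. You describe it as ``the crux'' and gesture at a conformal change plus an averaged volume comparison, but no such argument is supplied, so the proof is incomplete at its central point. The paper closes this step without any diameter estimate or conformal deformation: plugging the Gauss equation into the second variation of the prescribed-mean-curvature functional, one keeps the full identity $\Ric(\nu)=\bRic(\nu,e_1)-\Ric_{\Sigma}(e_1)+\sum_{i\ge 2}\left(A_{11}A_{ii}-A_{1i}^2\right)$ (your version omits the cross terms $A_{11}A_{ii}$, which are precisely the delicate ones), absorbs the second fundamental form using the weight $\alpha=\tfrac{n-3}{n-2}$ so that the coefficient $\mathfrak t_\alpha=\tfrac{1}{n-1}\bigl(1-\tfrac{\sqrt{n-2}\,\alpha}{2}\bigr)$ remains positive, and arrives at the spectral inequality $\int_{\Sigma}|\nabla\psi|^2+\alpha\Ric_{\Sigma}(e_1)\psi^2\ge\int_{\Sigma}\bigl(\alpha(n-2)-\mathfrak t_\alpha\mu^2\bigr)\psi^2$ for all $\psi$; Antonelli--Xu's spectral Bishop--Gromov theorem then yields $\area(\Sigma)\le|\mathbb{S}^{n-1}|\bigl(1-\tfrac{\mathfrak t_\alpha\mu^2}{(n-2)\alpha}\bigr)^{1-n}$ directly. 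Note that the restriction $n\le 7$ enters through $\mathfrak t_\alpha>0$ (as well as regularity), not merely through smoothness of minimizers as you assert.

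Even granting an area bound, your global scheme is not justified. Stable $\mu$-bubbles are produced one at a time in a prescribed band; nothing forces them to track distance spheres, to exist for every radius $s$, or to arise as level sets of a $1$-Lipschitz function, so the coarea inequality you invoke has no basis; moreover the provable bound is $(1+o(1))|\mathbb{S}^{n-1}|$ with error of order $\mu^2\sim(L-l)^{-2}$, so near-sharp leaves require wide bands, incompatible with ``one leaf per radius''. The paper avoids foliations altogether: for fixed $p,R$ it takes a remote point $q$, uses $B_R(p)\subset A_{\dist(p,q)-R,\dist(p,q)+R}(q)$ (width $2R$, which is where the factor $2$ in the statement comes from, not a short-range Bishop--Gromov patch), builds a single connected \emph{separating} bubble $\Sigma^o$ in an annulus near $q$ --- the relevant dichotomy is $H_{n-1}(M)=0$ versus $H_{n-1}(M)\neq 0$ (the latter handled via Shen--Sormani's splitting double cover), and the homological hypothesis is what makes the separation property and connectedness of $\partial E^p_t$ work; your line/no-line split neither covers the one-ended $H_{n-1}\neq 0$ case nor yields any separation property --- and then applies the hypersurface volume comparison along geodesics from $q$ behind $\Sigma^o$, obtaining $\vol(B_R(p))\le 2R(1+o(1))|\mathbb{S}^{n-1}|$ with no additive constant. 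Your bound $|\mathbb{S}^{n-1}|R+O(1)$ combined with $\omega_nR^n\le 2|\mathbb{S}^{n-1}|R$ (valid only for $R^{n-1}\le 2n$) does not give the uniform supremum over all intermediate $R$ and all $p$. Finally, in the splitting case the clean argument is to pair a direction tangent to $N$ with the $\mathbb R$-factor, which turns $\bRic_M\ge n-2$ into $\Ric_N\ge n-2$ and hence $\vol(N)\le|\mathbb{S}^{n-1}|$ by Bishop; the volume bound you invoke for closed manifolds satisfying only $\bRic\ge n-2$ is not an available result.
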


The splitting case is simple and so we only need to focus on the non-splitting case. Our strategy to prove Theorem \ref{Thm: main 1} is to look backward from a far-away point $q$. We use the simple fact that the geodesic ball $B_R(p)$ is contained in some annulus $A_{L,L+2R}(q):=B_{L+2R}(q)\setminus \bar B_L(q)$ with width $2R$. When $q$ is sufficiently near infinity, the constant $L$ will be large enough to find a connected separating stable $\mu$-bubble $\Sigma$ in the annulus $A_{L/2,L}(q)$, for which we can derive the area estimate $\area(\Sigma)\leq (1+o(1))\cdot|\mathbb S^{n-1}|$ as $L\to+\infty$ from Antonelli-Xu's spectrum Bishop-Gromov volume comparison theorem \cite{Antonelli-Xu2024} when the dimension is no greater than seven. Looking backward from the point $q$ we see that the geodesic ball lies behind the $\mu$-bubble $\Sigma$ and then the comparison theorem for hypersurfaces yields that $\vol(B_R(p))\leq 2R\cdot (1+o(1))|\mathbb S^{n-1}|$ as $L\to+\infty$. Here we point out that the coefficient $2R$ comes from the width of the annulus $A_{L,L+2R}(q)$.


Once the volume bound estimate is proved, it follows from the work \cite{Sormani1998} of Sormani that any Busemann function $b$ has a minimum point $p$. Viewing the Busemann function as the distance function from infinity, ideally the geodesic ball $B_R(p)$ will be contained in the annulus $b^{-1}([t,t+R])$ for some constant $t$. Notice that the width of the annulus is improved to $R$ from $2R$. By repeating our previous argument we can obtain the desired volume growth estimate \eqref{Eq: volume growth one end}.

The volume bound estimate \eqref{Eq: volume bound estimate} is optimal for complete non-compact  manifolds with $\Ric\ge0$ and $\bRic\ge n-2$ given the one-ended example from \cite{Wei-Xu-Zhang2024}. After taking closed manifolds into consideration we are also able to verify the volume bound conjecture under the condition of positive bi-Ricci curvature. 

\begin{theorem}\label{Thm: volume conjecture biRic}
    Let $3\le n\leq 7$. There exists a constant $C(n)>0$ such that for any Riemannian manifolds $(M^n,g)$ with Ricci curvature $\Ric\ge 0$ and bi-Ricci curvature  $\bRic\ge n-2$, there holds 
 \begin{align*}
 \sup_{p\in M, R>0}\frac{\vol(B_R(p))}{R}\le C(n).
 \end{align*}
\end{theorem}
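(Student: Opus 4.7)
The plan is to reduce Theorem \ref{Thm: volume conjecture biRic} to Theorem \ref{Thm: main 1} in the non-compact case, and for closed $M$ to extend the $\mu$-bubble argument behind Theorem \ref{Thm: main 1} by combining it with a Bishop-Gromov packing on the global scale.

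If $M$ is non-compact, Theorem \ref{Thm: main 1} already gives the bound with $C(n) = 2|\mathbb{S}^{n-1}|$, so we may assume $M$ is closed. Fix a dimensional threshold $L_0 = L_0(n)$ large enough that the Antonelli-Xu spectrum Bishop-Gromov comparison (valid for $n \leq 7$) bounds the area of any connected separating stable $\mu$-bubble inside an annulus of inner radius $\geq L_0/2$ by $2|\mathbb{S}^{n-1}|$. If $\diam(M) \leq 3L_0$, then Bishop-Gromov for $\Ric \geq 0$ gives $\vol(M) \leq \omega_n(3L_0)^n$ and consequently $\sup_{p,R}\vol(B_R(p))/R \leq \omega_n(3L_0)^{n-1}$.

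If instead $\diam(M) > 3L_0$, then for every $p\in M$ I pick $q\in M$ with $d(p,q) = \diam(M)$ and apply the backward-from-$q$ argument from the proof of Theorem \ref{Thm: main 1}: a connected separating stable $\mu$-bubble $\Sigma$ in the annulus $A_{L_0/2, L_0}(q)$ satisfies $\area(\Sigma) \leq 2|\mathbb{S}^{n-1}|$, and the hypersurface comparison principle yields
\[
\vol(B_R(p)) \leq 4R|\mathbb{S}^{n-1}| \qquad \text{for all } R \leq \tfrac{1}{3}\diam(M).
\]
For larger $R$ I cover $M$ by at most $6^n$ balls of radius $\diam(M)/3$: take a maximal $\diam(M)/3$-separated net $\{p_i\}$; the pairwise disjoint balls $B_{\diam(M)/6}(p_i)$ each have volume $\geq \vol(M)/6^n$ by Bishop-Gromov monotonicity applied with outer radius $\diam(M)$, so the net has cardinality $\leq 6^n$, and by maximality the balls $\{B_{\diam(M)/3}(p_i)\}$ cover $M$. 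Each such ball has volume $\leq (4/3)\diam(M)|\mathbb{S}^{n-1}|$ by the previous display, so $\vol(M) \leq 6^n\cdot(4/3)\diam(M)|\mathbb{S}^{n-1}|$, and therefore $\vol(B_R(p))/R \leq 4\cdot 6^n|\mathbb{S}^{n-1}|$ whenever $R \geq \diam(M)/3$.

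The main obstacle is confirming that the $\mu$-bubble construction and the Antonelli-Xu area estimate transfer unchanged from the non-compact setting to closed $M$. This amounts to checking that the annulus $A_{L_0/2, L_0}(q)$ and the supports of the usual $\mu$-bubble barriers fit inside $M$ (automatic once $d(p,q) > L_0$) and that the spectrum Bishop-Gromov comparison depends only on the pointwise $\bRic \geq n-2$ hypothesis, not on any global structure. Once these are secured, combining the small- and large-diameter subcases with $C(n) = \max\bigl(\omega_n(3L_0)^{n-1},\, 4\cdot 6^n|\mathbb{S}^{n-1}|\bigr)$ completes the proof.
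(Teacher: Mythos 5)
There are two genuine gaps in your treatment of the closed case. First, the ``backward-from-$q$'' argument of Theorem \ref{Thm: main 1} that you want to transfer is its Case 2, and it uses $H_{n-1}(M)=0$ in an essential way: both the connectedness of $\partial E^p_t$ and the separating property of the $\mu$-bubble component $\Sigma^o$ are obtained by producing a hypersurface with non-zero intersection number against a closed loop and declaring $[\Sigma^o]\neq 0\in H_{n-1}(M)$ a contradiction. For a general closed manifold with $\Ric\ge 0$ and $\bRic\ge n-2$ this hypothesis can fail (e.g.\ $\mathbb S^{n-1}(1)\times \mathbb S^{1}(r)$ with $r$ huge, where $\partial E^p_t$ really is disconnected), so the construction does not go through as stated. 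This is exactly why the paper first passes to the universal cover (ball volumes only increase under lifting) and then uses that $\pi_1(M)=0$ forces orientability and $H_1(M)=0$, hence $H_{n-1}(M)=0$ by Poincar\'e duality for closed $M$; your proposal never secures this, and your ``main obstacle'' paragraph checks only metric fitting of the annulus and the pointwise nature of the curvature hypothesis.

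Second, your annulus is placed at fixed radii $A_{L_0/2,L_0}(q)$, while the ball $B_R(p)$ sits at distance roughly $\dist(p,q)-R\ge \tfrac{2}{3}\diam(M)$ from $q$. The hypersurface comparison (as in \cite[Lemma 5.3]{Wei-Xu-Zhang2024}) only gives $\area(T_s)\le \area(\Sigma)\,(s/l)^{n-1}$ with $l\approx L_0/2$ and $s\approx \diam(M)$, so the comparison factor is of order $(\diam(M)/L_0)^{n-1}$, which is unbounded as the diameter grows; the claimed inequality $\vol(B_R(p))\le 4R|\mathbb S^{n-1}|$ for all $R\le \tfrac13\diam(M)$ therefore does not follow from your construction. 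The remedy (and what the paper does) is to place the $\mu$-bubble in an annulus whose radii are comparable to $\dist(p,q)$ itself, e.g.\ $A_{d(p,q)-2r_0/3,\,d(p,q)-r_0/3}(q)$ with $r_0=\min\{\dist(p,q),R\}$, so that the ratio $s/l$ is bounded by a dimensional constant (at the price of a worse, but still dimensional, constant $C(n)$), with small $R$ handled directly by Bishop--Gromov; your subsequent $6^n$-ball packing for $R\ge \diam(M)/3$ is fine in itself but rests on the flawed bound, and is anyway replaceable by the simpler doubling $\vol(B_R(p))\le 3^n\vol(B_{r_0/3}(p))$ used in the paper.
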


\bigskip
\noindent {\it Acknowledgement.} We are grateful to Dr. Kai Xu for reminding us a computational error in the proof of Theorem \ref{Thm: main 1} when the first version of this paper was put online.

 \section{Proof of main theorems}
For any two orthonormal vectors $v,w\in T_pM$,  the bi-Ricci curvature is defined by 
  \begin{align*}
  \bRic(v,w)=\Ric(v,v)+\Ric(w,w)-\Rm(v,w,w,v).
  \end{align*}

\begin{proof}[Proof of Theorem \ref{Thm: main 1}]
We divide the proof into two cases:

\vspace{2mm}{\it Case 1. $M$ satisfies $H_{n-1}(M)\neq 0$.} 

\vspace{2mm}From the work \cite{Shen-Sormani2001} of Shen and Sormani, we know
\begin{itemize}
    \item if $M$ is orientable, then $M$ splits as a Riemannian product $N\times \mathbb R$;
    \item if $M$ is non-orientable, then $M$ is a one-ended flat normal bundle of a closed totally-geodesic orientable submanifold $N$.
\end{itemize}
In the former case, the slice $N$ must be totally geodesic and so it satisfies $\Ric_N\geq n-2$ from our bi-Ricci assumption. In particular, we have $\vol(N)\leq |\mathbb S^{n-1}|$. Since any geodesic $R$-ball is contained in $N\times[l,l+2R]$ for some constant $l\in \mathbb R$, we see that its volume cannot exceed $2R\cdot|\mathbb S^{n-1}|$. In the latter case, the proof of \cite[Theorem 1.2]{Shen-Sormani2001} and \cite[Theorem 11]{Sormani 2001} implies that $M$ has a double cover which splits isometrically. Denote $\pi: \hat{M}\to M$ to be the corresponding covering map and choose $\hat{p}\in \pi^{-1}(p)$. Then we have
$$\vol(B_{R}(p))\leq \vol(B_R(\hat p))\leq 2R\cdot|\mathbb S^{n-1}|.$$

\vspace{2mm}{\it Case 2. $M$ satisfies $H_{n-1}(M)= 0$.}

\vspace{2mm}Take a geodesic ball $B_R(p)\subset M$ and let $\varepsilon>0$ be a constant in $(0,1)$. Take a point $q\in M$ such that $\dist(p,q)>R\varepsilon^{-1}$.  Then we have
\begin{align}\label{Eq: 1}
B_R(p)\subset A_{\dist(p,q)-R,\dist(p,q)+R}(q).
\end{align}
Here and in the sequel, we use $A_{r_1,r_2}(q)$ to denote the open annulus $B_{r_2}(q)\setminus \bar B_{r_1}(q)$ centered at the point $q$.
Let $L=(1-\varepsilon)\cdot \dist(p,q)$. Since we have $R< \varepsilon \cdot \dist(p,q)$ by our choice of $q$, then for any constant $t\in (0,L]$ there holds 
$$B_R(p)\subset M\setminus \bar B_t(q).$$
Denote $E^p_t$ to be the connected component of  $M\setminus \bar B_t(q)$ containing the point $p$. It is clear that we have
$B_R(p)\subset E^p_t$ for all $t\in(0,L]$.

We claim that the boundary $\partial E^p_t$ must be connected for each $t\in(0,L]$. That is, one cannot find two disjoint open subsets $U_1$ and $U_2$ in $M$ such that $\partial E_t^p$ is contained in the union $U_1\cup U_2$ but $\partial E_t^p\cap U_i$ are both non-empty for $i=1,2$. 

Otherwise, the boundary $\partial E_t^p$ will have a decomposition $\partial E_t^p=\Sigma_1\cup \Sigma_2$ with $\Sigma_i=\partial E_t^p\cap U_i\neq \emptyset$ for $i=1,2$. It is clear that both $\Sigma_i=\partial E_t^p\cap \bar U_i$ are compact subsets and in particular there are positive constants $\varepsilon_i$ such that $B_{\varepsilon_i}(\Sigma_i)\subset U_i$. 
Fix a point $q_i$ in $B_{\varepsilon_i}(\Sigma_i)\cap E_t^p$ and let $\gamma_i$ be a minimizing geodesic segment connecting $q$ and $q_i$. When $\varepsilon_i$ is chosen to be small enough such that $\dist(\partial B_{\varepsilon_i}(\Sigma_i),\partial U_i)>\varepsilon_i$, then the minimizing segment $\gamma_i$ intersects $\Sigma_i$ at a unique point $q_i^*$. Take a small portion of $\gamma_i$ centered at the point $q_i^*$ and denote $\gamma_i^*:[-\varepsilon_i^*,\varepsilon_i^*]\to M$ to be its arc-length parameterization such that $\gamma_i^*(0)=q_i^*$, $\gamma_i^*(\varepsilon_i^*)\in E_t^p$ and $\gamma_i^*(-\varepsilon_i^*)\in B_{t}(q)$. Since both $E_t^p$ and $B_t(q)$ are connected, we can connect $\gamma_1^*(\varepsilon_1^*)$ and $\gamma_2^*(\varepsilon_2^*)$ by a path $\zeta_+$ contained in $E_t^p$, and also connect $\gamma_1^*(-\varepsilon_1^*)$ and $\gamma_2^*(-\varepsilon_2^*)$ by a path $\zeta_-$ contained in $B_t(q)$. Denote $\gamma$ to be the closed curve given by $$\gamma=\gamma_1^**\zeta_+*(\gamma_2^*)^{-1}*(\zeta_-)^{-1}.$$
Consider the neighborhood $B_\delta(\Sigma_1)$ with $\delta$  a positive constant to be determined and the signed distance function $\rho_1:B_\delta(\Sigma_1)\to (-\delta,\delta)$ defined by
\[
\rho_1(x)=\left\{\begin{array}{cc}
  \dist(x,\partial E_t^p),   & x\in E_t^p; \\
  -\dist(x,\partial E_t^p),   & x\notin E_t^p.
\end{array}\right.
\]
Since $\gamma$ is a minimizing geodesic segment, by taking $\delta$ small enough we can guarantee
\begin{itemize}
\item  $\rho_1$ is a continuous function on $B_\delta(\Sigma_1)$;
    \item $\im \gamma\cap B_{\delta}(\Sigma_1)=\gamma_1^*((-\delta,\delta))$;
    \item  there is a neighborhood $\mathcal N$ of $\gamma_1^*((-\delta,\delta))$ such that $\rho_1$ is smooth in $\mathcal N$ and equals to $\dist(\cdot,q)-t$.
\end{itemize}
By smoothing we can find a smooth function $\rho_1^*:B_\delta(\Sigma_1)\to (-2\delta,2\delta)$ such that we have $|\rho_1^*-\rho_1|<\delta/2$ in $B_\delta(\Sigma_1)$ and $\rho_1^*=\rho_1$ in $\mathcal N$. Since $\rho_1^*$ is smooth, it follows from the Sard theorem that there is a regular value $\tau\in (-\delta/2,\delta/2)$ such that the preimage $\mathcal S:=(\rho_1^*)^{-1}(\tau)$ is an embedded hypersurface in $B_\delta(\Sigma_1)$. It is clear that we have
$$\mathcal S\cap \im\gamma=\mathcal S\cap \mathcal N\cap \gamma_1^*((-\delta,\delta))=\gamma_1^*(\tau).$$
Namely, $\mathcal S$ intersects transversely with $\gamma$ at a single point $\gamma_1^*(\tau)$. This means that $\mathcal S$ has non-zero intersection number with $\gamma$ and so $[\mathcal S]\neq 0\in H_{n-1}(M)$, which leads to a desired contradiction.

In next step, we want to construct a connected $\mu$-bubble as a shielding hypersurface in our later use of the volume comparison theorem of hypersurfaces. Let $l=(1-5\varepsilon)\cdot\dist(p,q)$ and we work with $V=E_l^p\setminus \bar E_L^p$. Without loss of generality, for fixed $\varepsilon>0$, we may choose $d(p,q)$ large enough such that $\bRic\ge 0$ on $V$.

First we show that $V$ is connected. To see this, we decompose $V$ into the union of its components as
$V=\cup_{i} V_i$, where each $V_i$ is non-empty, open and connected. Notice that we have $\partial E_t^p \subset V$ for all $t\in (l,L)$. For each $t$ the connectedness of $\partial E_t^p$ yields $\partial E_t^p\subset V_{i(t)}$ for some index $i(t)$. Take a point $q_t\in \partial E_t^p$ and let $\gamma_t:[0,t]\to M$ be a minimizing geodesic segment connecting $q$ and $q_t$. Then one can verify $\gamma_t(s) \in \partial E_s^p$ for all $s\in (0,t]$. In particular, this yields $i(t)\equiv i_0$ for all $t\in (l,L)$ and some fixed index $i_0$. Given each point $q_V\in V$ we can connect it to the point $p$ by a path in $E_l^p$ and so in $E_{l+\delta}^p$ with some positive constant $\delta$. Let $\gamma_V:[0,t_V]\to M$ be a minimizing geodesic segment connecting $q$ and $q_V$, then one can verify $\gamma_V(s) \in \partial E_s^p$ for all $s\in (0,l+\delta]$. This yields $V\subset V_{i_0}$ and so $V=V_{i_0}$ is connected.

For convenience, we denote $\rho$ to be the restriction of $\dist(\cdot,q)$ on $V$. Denote $\gamma_p:[0,t_0]\to M$ to be a minimizing geodesic segment connecting $q$ and $p$. By smoothing we can find a smooth function $\rho^*$ on $V$ such that $|\rho^*-\rho|<\epsilon\cdot \dist(p,q)$ and $|\mathrm d\rho^*|<2$ in $V$, and $\rho^*=\rho$ in a small neighborhood of $\gamma_p\left(\frac{l+L}{2}\right)$. As before, for some constant $\tau$ very close to $\frac{l+L}{2}$ the hypersurface $\mathcal S_\tau:=(\rho^*)^{-1}(\tau)$ has non-zero intersection number with the path $\gamma_p|_{[l,L]}$ in $\bar V$. 

Define 
$$V^*=(\rho^*)^{-1}\left(\left[\frac{3l+L}{4}, \frac{l+3L}{4}\right]\right).$$
Let
\begin{equation}\label{Eq: alpha}
    \alpha=\left\{
\begin{array}{cc}
1,&n=3;\\
\frac{n-3}{n-2},&4\leq n\leq 7.
\end{array}\right.
\end{equation}
Denote
\begin{equation}\label{Eq: t_alpha}
    \mathfrak t_\alpha=\frac{1}{n-1}\left(1-\frac{\sqrt{n-2}\alpha}{2}\right)
\end{equation}
and define
$$h=-\mu\tan\left(\frac{\mathfrak t_\alpha \mu}{4}t-\frac{l+L}{2}\right)\mbox{ with }\mu=\frac{8\pi}{\mathfrak t_\alpha (L-l)}.$$
We point out the fact that $n\leq 7$ is used here to guarantee $\mathfrak t_\alpha>0$. It is easy to verify that $h$ satisfies $h'<0$ and
$$\mathfrak t_\alpha h^2+4h'=-\mathfrak t_\alpha \mu^2.$$
Denote $\Omega_0=\{\rho^*<\tau\}\subset V$ and we consider the functional
$$\mathcal A^h(\Omega)=\mathcal H^{n-1}(\partial^*\Omega)-\int_{V}(\chi_\Omega-\chi_{\Omega_0})h\circ \rho^*\,\mathrm d\mathcal H^n,$$
where $\Omega$ is any Caccippoli set in $V$ such that the symmetric difference $\Omega\Delta \Omega_0$ has compact closure in $V^*$ and $\partial^*\Omega$ is its reduced boundary. Since we have $n\leq 7$, it follows from \cite[Proposition 2.1]{Zhu2021} that there is a smooth minimizer $\Omega_{min}$ of the functional $\mathcal A^h$. By definition $\partial\Omega_{min}$ is homologous to $\mathcal S_\tau$ and so it has non-zero intersection number with $\gamma_p|_{[l,L]}$ in $\bar V$. Since the intersection number is additive, non-zero intersection number also holds for some component of $\partial\Omega_{min}$, denoted by $\Sigma^o$. 

Now we verify that  
the hypersurface $\Sigma^o$ satisfies the following properties:
\begin{itemize}
\item (separating property) Any path $\gamma$ in $\bar V$ connecting $\partial E^p_l$ and $\partial E^p_L$ has non-empty intersection with $\Sigma^o$. Otherwise, there is a path $\gamma:[a,b]\to \bar V$ such that $\gamma(a)\in \partial E^p_l$ and $\gamma(b)\in \partial E_L^p$, which does not intersect with $\Sigma^o$. Since $\Sigma^o$ does not touch $\partial E^p_l$ and $\partial E_L^p$, we can find an open neighborhood $U$ of $\partial E_l^p\cup \partial E^p_L$ such that $\Sigma^o\cap U=\emptyset$. The connectedness of $\partial E_l^p$ yields that we can find path $\zeta_l$ in $U$ connecting $\gamma_p(l)$ and $\gamma(a)$. Similarly, we can connect $\gamma_p(L)$ and $\gamma(b)$ by a path $\zeta_L$ in $U$. In particular, $\Sigma^o$ has non-zero intersection number with the closed curve $\gamma_p|_{[l,L]}*\zeta_L*\gamma^{-1}*\zeta_l^{-1}$. This yields $[\Sigma^o]\neq 0\in H_{n-1}$, which is impossible from our assumption.
\item (area bound) The area of $\Sigma^o$ satisfies
$$\area(\Sigma^o)\leq |\mathbb S^{n-1}|\cdot\left(1-\frac{\mathfrak t_\alpha \mu^2}{(n-2)\alpha}\right)^{1-n}.$$
To see this we use the minimizing property of $\Omega_{min}$.  The first variation formula of $\mathcal A^h$ yields 
$$\int_{\Sigma^o}(H-h\circ \rho^*)\psi\,\mathrm d\sigma=0\mbox{ for all }\psi\in C^\infty(\Sigma^o), $$
where $H$ is the mean curvature of $\Sigma^o$ with respect to the outer unit normal $\nu$ of $\Sigma^o$ in $\Omega_{min}$. In particular, we have $H=h\circ \rho^*$ on $\Sigma^o$. The second variation formula of $\mathcal A^h$ implies
$$\int_{\Sigma^o}|\nabla\psi|^2-\left(\Ric(\nu)+|A|^2+2\partial_\nu(h\circ \rho^*)\right)\psi^2\,\mathrm d\sigma\geq 0\mbox{ for all }\psi\in C^\infty(\Sigma^o).$$
Let $\{e_i\}_{i=1}^{n-1}$ be any orthonormal frame on $\Sigma^o$. Recall by definition that $\bRic(\nu,e_1)=\Ric(\nu)+\Ric(e_1)-\Rm(\nu,e_1,e_1,\nu)$. From the Gauss equation we see
$$\Ric(\nu)=\bRic(\nu,e_1)-\Ric_{\Sigma^o}(e_1)+\sum_{i=2}^{n-1}\left(A_{11}A_{ii}-A_{1i}^2\right).$$
Clearly we have
$$\left|\sum_{i=2}^{n-1}A_{11}A_{ii}\right|\leq \sum_{i=2}^{n-1}\left(\frac{1}{2\sqrt{n-2}}A_{11}^2+\frac{\sqrt{n-2}}{2}A_{ii}^2\right)= \frac{\sqrt{n-2}}{2}\sum_{i=1}^{n-1}A_{ii}^2$$
and
$$\sum_{i=2}^{n-1}A_{1i}^2\leq \frac{1}{2}\sum_{i\neq j}A_{ij}^2.$$
Combined with the facts $\Ric(\nu)\geq 0$ and $n\geq 3$ it follows
$$\Ric(\nu)+|A|^2\geq \alpha\left( \bRic(\nu,e_1)-\Ric_{\Sigma^o}(e_1)\right)+\left(1-\frac{\sqrt{n-2}\alpha}{2}\right)|A|^2.$$
Using the fact $|A|^2\geq \frac{H^2}{n-1}$ and $|\nu(h\circ \rho^*)|>2h'$, finally we arrive at
$$\int_{\Sigma^o}|\nabla\psi|^2+\alpha\Ric_{\Sigma^o}(e_1)\psi^2\geq \int_{\Sigma^o}\left(\alpha\bRic(\nu,e_1)+(\mathfrak t_\alpha h^2+4h')\circ\rho^*\right)\psi^2\,\mathrm d\sigma.$$
for all $\psi\in C^\infty(\Sigma^o)$. It follows from \cite[Theorem 1]{Antonelli-Xu2024} that
$$\area(\Sigma^o)\leq |\mathbb S^{n-1}|\cdot\left(1-\frac{\mathfrak t_\alpha \mu^2}{(n-2)\alpha}\right)^{1-n}.$$
\end{itemize}

Then we are ready to derive the volume bound for $B_R(p)$. Denote $\mathcal C_q$ to be the cut locus of $q$. From \eqref{Eq: 1} we know that for any point $x\in B_R(p)\setminus \mathcal C_q$ there is a minimizing geodesic segment $\gamma_x:[0,s]\to M$ with $\dist(p,q)-R<s<\dist(p,q)+R$ connecting points $q$ and $x$. In particular, $\gamma_x|_{[l,L]}$ is a path in $\bar V$ connecting $\partial E_l$ and $\partial E_L$ and so the separating property of $\Sigma^o$ yields $\gamma_x\cap \Sigma^o\neq \emptyset$. In other words, $B_R(p)$ lies behind $\Sigma^o$ with respect to the point $q$. Denote $T_s=\partial B_s(q)\cap (B_R(p)\setminus \mathcal C_q)$. We conclude from the volume comparison theorem of hypersurfaces (see \cite[Lemma 5.3]{Wei-Xu-Zhang2024} for instance) that 
\begin{align*}
\frac{\area(T_s)}{\area(\Sigma^o)}\le \left(\frac{s}{l}\right)^{n-1}\le \left(\frac{\dist(p,q)+R}{(1-5\varepsilon)\cdot\dist(p,q)}\right)^{n-1}.
\end{align*}
As a result, we can compute
\begin{align*}
\vol(B_R(p))&= \int_{\dist(p,q)-R}^{\dist(p,q)+R}\area(T_s)\,\mathrm ds\\
&\le 2R\left(\frac{\dist(p,q)+R}{(1-5\varepsilon)\cdot\dist(p,q)}\right)^{n-1}\area(\Sigma^o).
\end{align*}
Letting $q\to \infty$ and then $\varepsilon\to 0$, we get 
\begin{align*}
\frac{\vol(B_R(p))}{R}\le 2|\mathbb{S}^{n-1}|. 
\end{align*}
Taking supremum for all $p\in M$ and $R>0$, we get 
\begin{align*}
\sup_{p\in M, R>0}\frac{\vol(B_R(p))}{R}\le 2|\mathbb{S}^{n-1}|.
\end{align*}
This completes the proof.
\end{proof}

\begin{proof}[Proof of Theorem \ref{Thm: volume conjecture biRic}]
Denote $\pi: \tilde{M}\to M$ by the universal cover of $M$ and $\tilde{p}=\pi^{-1}(p)$. Then, we know 
$\vol(B_R(p))\le \vol(B_R(\tilde{p}))$.
So, without loss of generality, we may assume $\pi_1(M)=0$. 

If $M$ is non-compact, from Theorem \ref{Thm: main 1} we have
\begin{equation}\label{Eq: bound 1}
    \frac{\vol(B_R(p))}{R}\leq 2|\mathbb S^{n-1}|.
\end{equation}

In the following, we assume $M$ to be closed. Since $\pi_1(M)=0$ implies $M$ is oriented and $H_1(M)=0$, the Poincar\'e duality yields $H_{n-1}(M)=0$.
As before, let $\alpha$ and $\mathfrak t_\alpha$ be the constants defined by \eqref{Eq: alpha} and \eqref{Eq: t_alpha}.    Take $L_0=L_0(n)>0$ to be a constant such that the constant
    $\mu:=4\pi\mathfrak t_\alpha^{-1}L_0^{-1}$ satisfies
    $$\frac{\mathfrak t_\alpha \mu^2}{(n-1)\alpha}=\frac{1}{2}.$$
    The following discussion will be divided into two cases:

    {\it Case 1. $\diam M\leq 6L_0$.} It follows from the Bishop-Gromov volume comparison theorem that
    \begin{equation}\label{Eq: bound 2}
        \frac{\vol(B_R(p))}{R}\leq \omega_n\left(\min\{R,\diam M\}\right)^{n-1}\leq \omega_n(6L_0)^{n-1},
    \end{equation}
    where $\omega_n$ is denoted to be the volume of the unit Euclidean $n$-ball.

    {\it Case 2. $\diam M>6L_0$.} Take point $q$ such that 
    $$\dist(p,q)=\max_{x\in M}\dist(p,x).$$
    From $\diam M>6L_0$ we see $\dist(p,q)>3L_0$. Denote $r_0=\min\{\dist(p,q),R\}$. From the Bishop-Gromov comparison theorem we know
    $$\vol(B_R(p))=\vol(B_{r_0}(p))\leq 3^n\vol(B_{r_0/3}(p)).$$
    It suffices to estimate $\vol(B_{r_0/3}(p))$. To do this we construct a connected $\mu$-bubble in the annulus region $A_{d(p,q)-2r_0/3,d(p,q)-r_0/3}(q)$ whose area cannot exceed
    $2^{n-1}|\mathbb S^{n-2}|$. Denote $T_s=\partial B_s(q)\cap B_{r_0/3}(p)$. Using the volume comparison theorem of hypersurfaces we obtain
    \[
    \begin{split}
    \vol(B_{r_0/3}(p))&=\int_{d(p,q)-\frac{r_0}{3}}^{d(p,q)+\frac{r_0}{3}}\area(T_s)\,\mathrm ds\\
    &\leq \int_{d(p,q)-\frac{r_0}{3}}^{d(p,q)+\frac{r_0}{3}} 2^{n-1}|\mathbb S^{n-1}|\cdot\left(\frac{3s}{3d(p,q)-2r_0}\right)^{n-1}\,\mathrm ds\\
    &\leq \frac{2}{3}\cdot 8^{n-1}|\mathbb S^{n-1}|r_0.
    \end{split}\]
    From this we see
    \begin{equation}\label{Eq: bound 3}
        \frac{\vol(B_R(p))}{R}\leq 2\cdot (24)^{n-1}|\mathbb S^{n-1}|.
    \end{equation}

    The proof is finally completed by combining \eqref{Eq: bound 1}, \eqref{Eq: bound 2} and \eqref{Eq: bound 3}.
\end{proof}

\begin{proof}[Proof of Theorem \ref{Thm: main 2}]
If $M$ has at least two ends, then by the splitting theorem $M$ splits as the Riemannian product $N^{n-1}\times \mathbb R$, where $N$ is a closed Riemannian manifold with $\Ric\geq n-2$. In particular, we have $\area(N)\leq |\mathbb S^{n-1}|$.  Now it is easy to verify that all the desired properties hold. 

In the following, we may assume $M$ to have only one end and we divide the discussion into two cases:

{\it Case 1. $H_{n-1}(M)\neq 0$.} As before, $M$ has a double cover which splits isometrically as $N\times \mathbb R$. Denote $\pi: \hat{M}\to M$ to be the corresponding covering map and the preimage $\pi^{-1}(p)$ consists of two points, denoted by $\hat p_1$ and $\hat p_2$. Clearly we have 
$$\vol(B_R(p))=\frac{1}{2}\vol(\pi^{-1}(B_R(p)))=\frac{1}{2}\vol(B_R(\hat p_1)\cup B_R(\hat p_2)).$$
Using the relation $B_R(\hat p_1)\subset B_R(\hat p_1)\cup B_R(\hat p_2)\subset B_{R+\dist(\hat p_1,\hat p_2)}(\hat p_1)$ we obtain
$$\frac{1}{2}\vol(B_R(\hat p_1))\leq \vol(B_R(p))\leq \frac{1}{2}B_{R+\dist(\hat p_1,\hat p_2)}(\hat p_1).$$
Multiplying the factor $R^{-1}$ to this inequality and letting $R\to+\infty$ we see
$$\lim_{R\to+\infty} \frac{\vol(B_R(p))}{R}=\area(N)\leq |\mathbb S^{n-1}|.$$

{\it Case 2. $H_{n-1}(M)=0$.}
In this case, by Theorem \ref{Thm: main 1} we already know that $M$ has linear volume growth, i.e.  
$$\limsup_{R\to \infty}\frac{\vol(B_R(p))}{R}\le 2|\mathbb{S}^{n-1}|.$$
From Sormani's theorem on the properness of the Busemann function \cite[Theorem 19 and Corollary 23]{Sormani1998} applied to Ricci-nonnegative manifolds with linear volume growth, we know that for any geodesic ray $\gamma:[0,+\infty)\to M$ on $M$ the corresponding Busemann function given by 
$$b(x)=\lim_{t\to +\infty}(t-\dist(x,\gamma(t)))$$ is proper and it satisfies
\begin{align*}
b_{min}:=\inf_{x\in M} b(x)>-\infty.
\end{align*}
In particular, there exists a point $p_0\in M$ such that $b(p_0)=b_{min}$. 

Let $q_t=\gamma(t)$ with $t$ large. Recall that in the proof of Theorem \ref{Thm: main 1} we use the relation $B_R(p_0)\subset A_{\dist(p_0,q_t)-R,\dist(p_0,q_t)+R}(q)$. Here we want to make an improvement on this using the fact that $p_0$ is a minimum point of the Busemann function $b$. Define 
$$b_{t}(x)=t-d(x,\gamma(t)).$$ 
It is well-known that $b_t(x)$ converges to $b(x)$ uniformly in any compact subset. Take $\bar B_R(p_0)$ as the compact subset. As a consequence, there is a positive constant $t_0$ such that $|b_t-b|_{L^\infty(B_R(p_0))}<1$ and so when $t\geq t_0$ we obtain
$$B_R(p_0)\subset b^{-1}([b_{min},b_{min}+R])\subset b_t^{-1}([b_{min}-1,b_{min}+R+1]).$$
Equivalently we have
$$B_R(p_0)\subset A_{t-b_{min}-R-1,t-b_{min}+1}(q_t).$$
Note that the width of the annulus equals to $R+2$, which is better than the previous $2R$ in view of the change of coefficient from two to one.

Now we can repeat the argument in the proof of Theorem \ref{Thm: main 1} for our desired results.

{\it 1. Existence of the limit.}
Denote
\begin{align*}
\Theta^1_{*}(\infty,p_0)=\liminf_{R\to \infty}\frac{\vol(B_R(p_0))}{R}.
\end{align*}
By definition for any $\varepsilon>0$ there exists a sequence $\{R_i\}$ with $R_i\to\infty$ as $i\to\infty$ such that 
\begin{align*}
\vol(B_{R_i}(p_0))\le (\Theta^1_{*}(\infty,p_0)+\varepsilon) R_i.
\end{align*}
From the co-area formula we can construct a closed hypersurface $\Sigma_i$  satisfying
\begin{itemize}
    \item any path in $\bar A_{\epsilon R_i,R_i}(p_0)$ connecting $\partial B_{\epsilon R_i}(p_0)$ and $\partial B_{R_i}(p_0)$ must have non-empty intersection with $\Sigma_i$;
    \item $\area(\Sigma_i)\leq (1-2\varepsilon)^{-1}(\Theta^1_{*}(\infty,p_0)+\varepsilon)$.
\end{itemize}
A similar argument as in the proof of Theorem \ref{Thm: main 1} yields
$$\Theta^{1,*}(\infty,p_0):=\limsup_{R\to \infty}\frac{\vol(B_R(p_0))}{R}\leq (1-2\varepsilon)^{-1}(\Theta^1_{*}(\infty,p_0)+\varepsilon).$$
By letting $\epsilon\to 0$ we see that the limit $\Theta^1(\infty,p_0)$ exists. It is a simple fact that if $\Theta^1(\infty,\cdot)$ exists at one point then it exists at all points, whose value is independent of the chosen point.

{\it 2. Optimal volume growth.} Since $H_{n-1}(M)=0$,  by the same argument as in the proof of Theorem \ref{Thm: main 1}, we know 
$$\vol(B_R(p))\leq (R+2)\left(\frac{\dist(p,q_t)+R}{(1-5\varepsilon)\cdot\dist(p,q_t)}\right)^{n-1}\area(\Sigma^o).$$
Letting $t\to +\infty$, $\varepsilon\to 0$ and $R\to+\infty$ we obtain $\Theta^1(\infty)\leq |\mathbb S^{n-1}|$.
\end{proof}
\begin{remark} By the proof above, we see that the density limit \eqref{existence of density} always exists for complete non-compact manifold with nonnegative Ricci curvature and minimal volume growth. Note that the bi-Ricci assumption is only necessary for the sharp bound.
\end{remark}

Finally, let us give the proof of Corollary \ref{Cor: iso}.
\begin{proof}[Proof of Corollary \ref{Cor: iso}]
    The corollary follows from the simple fact: if $(M,g)$ is a complete and non-compact Riemannian manifold with nonnegative Ricci curvature, where the limit $\Theta^1(\infty)$ exists, then we have 
    $$\lim_{v\to+\infty}I_M(v)=\Theta^1(\infty).$$
    To see this first notice that the isoperimetric profile $I_M(v)$ is non-decreasing due to the nonnegative Ricci curvature (see \cite[Theorem 3.8 (1)]{Antonelli-EPS2022}). Fix any constant $\epsilon$ in $(0,1/2)$ and a point $p\in M$. By definition we can find a sequence $\{r_i\}$ diverging to $+\infty$ such that $\vol(B_{r_i}(p))\leq \Theta^1(\infty)+\epsilon$. From co-area formula we can find a hypersurface $\mathcal S_i$ such that $\mathcal S_i$ encloses $B_{\epsilon r_i}(p)$ and satisfies $\area(\mathcal S_i)\leq (1-2\epsilon)^{-1}(\Theta^1(\infty)+\epsilon)$. Denote $v_i$ to be the volume enclosed by $\mathcal S_i$. Then we obtain
    $$I_M(v_i)\leq \frac{\Theta^1(\infty)+\epsilon}{1-2\epsilon}.$$
    Since any complete and non-compact Riemannian manifold with nonnegative Ricci curvature has infinite volume, we see $v_i\to +\infty$ and so $\lim_{v\to+\infty}I_M(v)\leq (1-2\epsilon)^{-1}(\Theta^1(\infty)+\epsilon)$. The proof of the fact is now completed by letting $\epsilon\to 0$. When $M$ has only one end, we know $\lim_{v\to+\infty}I_M(v)\leq \Theta^1(\infty)\leq |\mathbb S^{n-1}|$.
\end{proof}

\bibliography{bib}
\bibliographystyle{amsplain}	
\end{document}